\numberwithin{equation}{section}
\theoremstyle{plain}
\newtheorem{theorem}{Theorem}[section]
\newtheorem{lemma}[theorem]{Lemma}
\theoremstyle{definition}
\newtheorem{case[theorem]}{Case}
\theoremstyle{remark}
\numberwithin{equation}{section}
\begin{document}

\title{Hamming Distances in Vector Spaces over Finite Fields}

\author{Esen Aksoy Yazici}

\maketitle

\begin{abstract}
Let $\mathbb{F}_q$ be the finite field of order $q$ and $E\subset \mathbb{F}_q^d$, where $4|d$.
Using Fourier analytic techniques, we prove that  if $|E|>\frac{q^{d-1}}{d}\binom{d}{d/2}\binom{d/2}{d/4}$, then the points of $E$ determine a Hamming distance $r$ for every even $r$.

\end{abstract}

\section{Introduction}

Let $\mathbb{F}_q$ be the finite field with order $q$, where $q=p^l$ and $p$ is an odd prime. In the vector space $\mathbb{F}_q^d$, we can consider the following distance map

\begin{equation}\label{distance}
 \lambda: (x,y)\longmapsto\|x-y\|=(x_1-y_1)^2+\ldots+(x_d-y_d)^2.
 \end{equation}

For  $E\subset \mathbb{F}_q^d$, let $\Delta(E)$ denote the set of distances determined by the points of $E$ that is, $$\Delta(E):=\{\|x-y\|: x,y\in E\}.$$

The Erd\H{o}s-Falconer distance problem in $\mathbb{F}_q^d$ asks for a threshold on the size  $E\subset \mathbb{F}_q^d$ so that $\Delta(E)$ contains a positive proportion of $\mathbb{F}_q.$

In \cite{IR07},  Iosevich and Rudnev proved that for $E\subset \mathbb{F}_q^d$ if $|E|>cq^{\frac{d+1}{2}}$ for a sufficiently large constant $c$, then $\Delta(E)=\mathbb{F}_q$. 

Erd\H{o}s-Falconer distance problem in modules $\mathbb{Z}_q^d$ over the cyclic rings $\mathbb{Z}_q$ was studied by Covert, Iosevich and Pakianathan in \cite{CIP}. More precisely, it is proven that for $E\subset \mathbb{Z}_q^d$, where $q=p^l$, if $|E|\gg l(l+1)q^{\frac{(2l-1)d}{2l}+\frac{1}{2l}}$, then $\Delta(E)$ contains all unit elements of $\mathbb{Z}_q$. 

For more literature on the distance introduced in (\ref{distance}) and related geometric configurations, we refer to \cite{BHIPR13, BIP, CEHIK12, CHISU, HI, HIKR11, IRZ12} and the references therein.

Here, in this paper, we tackle a similar problem related to coding theory. Instead of the distance given in (\ref{distance}), we consider the Hamming distance in $\mathbb{F}_q^d$, a key notion in coding theory, and ask similar geometric configurations in $\mathbb{F}_q^d$. We note that the approach we use to prove the main theorem of this paper is analogous to the one employed in \cite{CIP} and \cite{IR07}. Let us first recall the necessary notion. 

For two vectors $x=(x_1,\dots ,x_d),y=(y_1,\dots, y_d)\in \mathbb{F}_q^d$,  the Hamming distance between $x$ and $y$ is defined as 
$$|x-y|=\sum_{i=1}^{d}d(x_i,y_i)$$
where

\begin{equation*}
d(x_i,y_i)=1-\delta_{x_i,y_i}= \left\{
\begin{array}{rl}
0 & \text{if } x_i=y_i,\\
1 & \text{if } x _i\ne y_i.
\end{array} \right.
\end{equation*}

In other words, the Hamming distance $|x-y|$ between $x$ and $y$ is the number of coordinates in which $x$ and $y$ differ. In particular, $|x|$  is the number of nonzero coordinates of $x$. We will denote the Hamming weight of $x$ as $wt(x)$.

The question we will be dealing with in this note is that for subsets $E$ of $\mathbb{F}_q^d$, which can be seen as a code over $ \mathbb{F}_q$, how large does the size of $E$ need to be to guarantee that $E$ contains the desired set of Hamming distances.

\subsection{Main Result}

\begin{theorem}\label{main}
Let $E\subset \mathbb{F}_q^d$ where $4|d$.   If $|E|>\frac{q^{d-1}}{d}\binom{d}{d/2}\binom{d/2}{d/4}$, then the points of $E$ determine a Hamming distance $r$ for every even $r$.

\end{theorem}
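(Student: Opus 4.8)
The plan is to fix an even $r$ with $0\le r\le d$ and to show that the number of ordered pairs in $E$ at Hamming distance $r$,
\[
\nu(r):=\#\{(x,y)\in E\times E:\ |x-y|=r\}=\sum_{x,y\in E}S_r(x-y),
\]
is positive, where $S_r$ is the indicator function of the Hamming sphere $\{z\in\mathbb{F}_q^d:\ wt(z)=r\}$. Using the Fourier transform on $\mathbb{F}_q^d$ normalized by $\widehat f(m)=\sum_{x}f(x)\chi(-x\cdot m)$ for a fixed nontrivial additive character $\chi$ of $\mathbb{F}_q$, expanding $S_r(x-y)$ by Fourier inversion and applying Plancherel yields
\[
\nu(r)=q^{-d}\,|S_r|\,|E|^{2}+q^{-d}\sum_{m\neq 0}\widehat{S_r}(m)\,|\widehat E(m)|^{2}.
\]
The first term is the expected main term (it equals $q^{-d}\binom{d}{r}(q-1)^r|E|^2$), so the whole problem is to control the sum over $m\neq 0$.

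I would next compute $\widehat{S_r}(m)$ exactly. Since the Hamming weight is additive over coordinates, one has the product identity $\sum_{z}t^{wt(z)}\chi(-z\cdot m)=\bigl(1+(q-1)t\bigr)^{d-j}(1-t)^{j}$ with $j:=wt(m)$ (each zero coordinate of $m$ contributes a factor $1+(q-1)t$, each nonzero coordinate a factor $1-t$). Hence $\widehat{S_r}(m)$ depends on $m$ only through $j=wt(m)$ and equals
\[
a_j:=[t^r]\bigl(1+(q-1)t\bigr)^{d-j}(1-t)^{j}=\sum_{k}(-1)^{k}\binom{j}{k}\binom{d-j}{r-k}(q-1)^{r-k},
\]
with $a_0=\binom{d}{r}(q-1)^r=|S_r|$. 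Writing $W_j:=\sum_{wt(m)=j}|\widehat E(m)|^2\ge 0$, Plancherel gives $\sum_{j=0}^{d}W_j=q^d|E|$ and $W_0=|E|^2$; discarding the nonnegative coefficients and bounding the negative ones uniformly, we get
\[
\nu(r)\ \ge\ q^{-d}|E|\Bigl(|S_r|\,|E|-q^{d}\,M_r\Bigr),\qquad M_r:=\max\bigl\{\,|a_j|:\ 1\le j\le d,\ a_j<0\,\bigr\}
\]
(with $M_r:=0$ if no $a_j$ is negative, in which case we are already done). Thus the theorem reduces to the purely combinatorial claim that, for every even $r$ and every $d$ divisible by $4$,
\[
M_r\ \le\ \frac{(q-1)^{r}}{d\,q}\binom{d}{r}\binom{d}{d/2}\binom{d/2}{d/4},
\]
since this makes $|E|>\frac{q^{d-1}}{d}\binom{d}{d/2}\binom{d/2}{d/4}$ force $|S_r|\,|E|>q^d M_r$.

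The estimate of $M_r$ is the technical heart of the argument and the step I expect to be the main obstacle. The parity of $r$ is used precisely here: it forces $(-1)^{r-k}=(-1)^k$ and thereby pins down the sign pattern of the alternating sum defining $a_j$. One shows that the summand $\binom{j}{k}\binom{d-j}{r-k}(q-1)^{r-k}$ is nonzero only for $\max(0,r-d+j)\le k\le\min(j,r)$, so that for those $j$ for which $a_j$ can be negative the surviving terms carry only a bounded power of $q-1$, and then controls the alternating sum by localizing its dominant term via monotonicity of consecutive-term ratios together with Vandermonde's identity $\sum_k\binom{j}{k}\binom{d-j}{r-k}=\binom{d}{r}$. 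Carrying out this case analysis on the position of $j$ relative to $d-r$ and then optimizing over $j$ and over even $r$, the worst case is governed by central binomial coefficients, and the hypothesis $4\mid d$ is what makes $d/2$ and $d/4$ integers and realizes this extremal configuration, producing the factor $\binom{d}{d/2}\binom{d/2}{d/4}$. The bulk of the remaining work is this binomial bookkeeping; the Fourier-analytic part above is routine.
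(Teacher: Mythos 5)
Your Fourier-analytic reduction is correct and is essentially the same as the paper's: your $a_j=[t^r](1+(q-1)t)^{d-j}(1-t)^j=\sum_k(-1)^k\binom{j}{k}\binom{d-j}{r-k}(q-1)^{r-k}$ is exactly the Krawtchouk value $K_r(j)$ appearing in the paper's formula $\widehat{S}_r(m)=q^{-d}K_r(wt(m))$, and your positivity criterion $|S_r|\,|E|>q^dM_r$ is the same comparison of main term against the worst nonzero Fourier coefficient. The genuine gap is that the inequality $M_r\le\frac{(q-1)^r}{dq}\binom{d}{r}\binom{d}{d/2}\binom{d/2}{d/4}$, which is the entire content of the theorem, is asserted rather than proved. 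This is a nontrivial extremal statement about Krawtchouk polynomials; the paper does not obtain it by elementary binomial bookkeeping but by invoking two lemmas of Krasikov and Litsyn \cite{KL} (for $d,i$ even, $|K_k(i)|\le|K_{d/2}(i)|$ and $|K_i(k)|\le\binom{d}{d/2}\binom{d/2}{i/2}/\binom{d}{k}$), together with the reciprocity relation $(q-1)^j\binom{d}{j}K_r(j)=(q-1)^r\binom{d}{r}K_j(r)$ to pass from even to odd weights $j$; it is there, and only there, that the hypotheses ``$r$ even'' and ``$4\mid d$'' actually do work (via $i=d/2$ and the integrality of $d/4$), producing the factor $\binom{d}{d/2}\binom{d/2}{d/4}$ in the paper's Lemma \ref{supshat}. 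Nothing in your sketch supplies a substitute for these inputs.

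Moreover, the mechanism you propose for the missing estimate does not hold up. The claim that when $a_j<0$ ``the surviving terms carry only a bounded power of $q-1$'' is false: the support restriction $\max(0,r-d+j)\le k\le\min(j,r)$ only bites when $j>d-r$, whereas negative values of $K_r(j)$ typically occur in the oscillatory bulk. For instance, with $q=3$, $d=8$, $r=2$, $j=4$ one has $a_4=\binom{4}{2}(q-1)^2-\binom{4}{1}\binom{4}{1}(q-1)+\binom{4}{2}=-2<0$, and the sum contains the full-strength term $\binom{4}{2}(q-1)^r$; the negativity comes from cancellation, not truncation. Consequently, localizing a dominant term via ratio monotonicity and Vandermonde can only give bounds on the scale of $(q-1)^r\binom{d}{r}$ (the same scale as the main term) and offers no mechanism that produces the gain $\binom{d}{d/2}\binom{d/2}{d/4}/(dq)$, nor any explanation of where $4\mid d$ enters. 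Until you prove the bound on $M_r$ (equivalently, on $\sup_{m\ne0}|\widehat{S}_r(m)|$), either by the Krasikov--Litsyn estimates as in the paper or by some other genuine argument, you have completed only the routine half of the proof.
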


\subsection{Fourier Analysis in $\mathbb{F}_q^d$}

 Let $ f : \mathbb {F}_{q}^d\to \mathbb{C}$. The Fourier transform of $f$ is defined as 
$$\widehat {f}(m)=q^{-d}\sum_{x\in \mathbb {F}_{q}^d}\chi(-x\cdot m)f(x),$$
where $\chi(z)=e^{\frac{2\pi i Tr{(z)}}{q}}$, $q=p^l$, p prime, and $Tr: \mathbb{F}_q\to \mathbb{F}_p$ is the Galois trace.

We recall the following properties of Fourier transform.
\begin{equation*} 
  q^{-d}\sum_{x\in \mathbb {F}_{q}^d}\chi(x\cdot m) = \left\{
    \begin{array}{rl}
      1, & \text{if } m=0\\
      0, & otherwise
      \end{array} \right.  \qquad \text{(Orthogonality)}
\end{equation*}

\begin{equation*}
f(x)=\sum_{m\in \mathbb {F}_{q}^d}\chi(x\cdot m)\widehat{f}(m) \qquad \text{(Inversion)}
\end{equation*}
\begin{equation*}
\sum_{m\in \mathbb {F}_{q}^d}|\widehat{f}(m)|^2=q^{-d}\sum_{x\in \mathbb {F}_{q}^d}|f(x)|^2. \qquad \text{(Plancherel)}
\end{equation*}

\vskip.125in 

\section{Proof of Main Result} For the proof of Theorem \ref{main}, we make use of the following lemmas:

\begin{lemma} Let $S_r(u)=\{v\in \mathbb{F}_q^d: |u-v|=r\}$ be the sphere of radius $r$ centered at $u\in \mathbb{F}_q^d$. Then
$$|S_r(u)|=(q-1)^r\binom{d}{r}.$$
\end{lemma}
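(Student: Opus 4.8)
The plan is to count directly the number of vectors $v \in \mathbb{F}_q^d$ whose Hamming distance from the fixed center $u$ equals $r$; by definition of $|\cdot|$, these are precisely the $v$ that disagree with $u$ in exactly $r$ of the $d$ coordinates. First I would reduce to the case $u=0$. Since $|u-v|$ depends only on the difference $u-v$, and the map $v \mapsto u-v$ is a bijection of $\mathbb{F}_q^d$ onto itself, the cardinality $|S_r(u)|$ is independent of the center, so it suffices to count the vectors $w\in\mathbb{F}_q^d$ with Hamming weight $wt(w)=r$.

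To enumerate such $w$, I would build each one in two independent stages. In the first stage, I choose the support of $w$, that is, the set of coordinates on which $w$ is nonzero (equivalently, on which $v$ disagrees with $u$); there are $d$ coordinates in total and we must select exactly $r$ of them, which can be done in $\binom{d}{r}$ ways. In the second stage, on each of the $r$ selected coordinates the entry may be any of the $q-1$ nonzero elements of $\mathbb{F}_q$ (in the general-center formulation, any of the $q-1$ field elements distinct from $u_i$), while on each of the remaining $d-r$ coordinates the entry is forced to be $0$ (respectively to equal $u_i$). Because these assignments are made independently coordinate by coordinate, the number of ways to fill in a fixed support of size $r$ is exactly $(q-1)^r$.

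Multiplying the number of admissible supports by the number of completions per support gives
$$|S_r(u)| = \binom{d}{r}(q-1)^r,$$
which is the claimed formula. There is no substantive obstacle in this argument; it is a routine multiplicative count. The only points meriting a line of justification are the translation invariance $|u-v|=|w|$ for $w=u-v$, which is immediate since $|\cdot|$ depends only on the coordinatewise pattern of differences, and the independence of the support-choice and entry-choice stages, which follows directly from the coordinatewise definition of the Hamming weight.
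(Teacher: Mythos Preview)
Your proof is correct and follows the same direct counting argument as the paper: choose the $\binom{d}{r}$ coordinates where $u$ and $v$ differ, then fill each with one of the $q-1$ available values. The only cosmetic difference is your explicit reduction to $u=0$ via translation invariance, which the paper leaves implicit.
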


\begin{proof} If $v\in S_r(u)$, then $u$ and $v$ differ in $r$ coordinates. Note that we have $\binom{d}{r}$ ways of choosing those $r$ coordinates, and for each of these $r$ coordinates of $v$ we have $q-1$ choices. 
\end{proof}

\begin{lemma}\label{supshat} Let $S_r:=S_r(0)=\{v\in \mathbb{F}_q^d: |v|=r\}$ denote the sphere of radius $r$ centered at $0\in \mathbb{F}_q^d$, where $4|d$ , and identify $S_r$ with its indicator function.Then
\begin{eqnarray*}
\sup_{0\ne m\in  \mathbb{F}_q^d} |\widehat{S}_r(m)|&= &q^{-d}\sup_{0\ne m\in \mathbb{F}_q^d} |K_{r}(wt(m))|\\
&\le& 
\begin{cases}
   q^{-d} \binom{d}{d/2} \binom{d/2}{d/4}      & \text{if } wt(m) \text{is even}  \\
   q^{-d}(q-1)^{r-1}\frac{\binom{d}{r}}{d}\binom{d}{d/2}\binom{d/2}{d/4}        & \text{if } wt(m) \text{is odd and}\; r\; \text{is even}
  \end{cases}
\end{eqnarray*}

\end{lemma}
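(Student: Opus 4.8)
The plan is to compute $\widehat{S}_r(m)$ explicitly and identify it (up to the $q^{-d}$ normalization) with a Krawtchouk polynomial evaluated at the weight of $m$, then bound that polynomial. First I would write
$\widehat{S}_r(m) = q^{-d}\sum_{v : |v|=r} \chi(-v\cdot m)$ and observe that the sum depends on $m$ only through which coordinates of $m$ are zero: if a coordinate $m_i=0$ it contributes a free choice of nonzero $v_i$, while if $m_i\neq 0$ the sum of $\chi(-v_i m_i)$ over the relevant range is governed by the character sum $\sum_{t\in\mathbb{F}_q}\chi(t)=0$, i.e. $\sum_{0\neq t}\chi(t)=-1$. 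Summing over the number $j$ of nonzero-$m$ coordinates among the $r$ moved coordinates, one gets
$$\sum_{v:|v|=r}\chi(-v\cdot m) = \sum_{j}\binom{wt(m)}{j}(-1)^j \binom{d-wt(m)}{r-j}(q-1)^{r-j},$$
which is exactly the (generalized/$q$-ary) Krawtchouk polynomial $K_r(wt(m))$. So the first displayed equality in the lemma is just this identification together with the definition $\widehat{S}_r(m)=q^{-d}K_r(wt(m))$.

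Next I would bound $|K_r(s)|$ for $1\le s\le d$ (recall $m\neq 0$ so $s=wt(m)\ge 1$), splitting into the two cases of the statement. For the case $s$ even: I expect the key to be a generating-function / symmetry identity for Krawtchouk polynomials — something like $K_r(s) = [z^r]\,(1-z)^s(1+(q-1)z)^{d-s}$ — combined with the fact that for $s$ even one can pair up the $(1-z)$ factors as $(1-z)^2$ and extract a clean bound; the worst case should be $s=d$ (so the $(1+(q-1)z)$ factor disappears), giving $|K_r(s)|\le \binom{d}{r}$ trivially, but that is not strong enough, so more likely the argument uses $q=1$ evaluation or an $L^\infty$ bound on coefficients of $(1-z)^d = ((1-z)^4)^{d/4}$ etc., and the binomial product $\binom{d}{d/2}\binom{d/2}{d/4}$ arises as the central coefficient of $(1+z)^d$ split as $(1+z)^{d/2}(1+z)^{d/2}$ or from $\sum_r \binom{d}{r}^{?}$. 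This is the step I expect to be the main obstacle: getting the precise constant $\binom{d}{d/2}\binom{d/2}{d/4}$ rather than a cruder bound, and this is presumably where the hypothesis $4\mid d$ is used (to split $d$ into four equal parts, or $d/2$ into two, so that central binomial coefficients are well-defined and maximal).

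For the case $s=wt(m)$ odd and $r$ even: here I would use a reduction relating $K_r(s)$ with $s$ odd to the even case. A standard contiguous relation for Krawtchouk polynomials expresses $K_r(s)$ in terms of $K_{r}(s-1)$ and $K_{r-1}(s-1)$ (or similar), and iterating/using that $r$ is even should let me factor out a $(q-1)^{r-1}\binom{d}{r}/d$ type prefactor while leaving a Krawtchouk value at an even argument, to which the first case applies. Concretely I would aim to show $|K_r(s)| \le (q-1)^{r-1}\frac{\binom{d}{r}}{d}\sup_{s'\text{ even}}|K_r(s')| \le (q-1)^{r-1}\frac{\binom{d}{r}}{d}\binom{d}{d/2}\binom{d/2}{d/4}$. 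The parity condition on $r$ is what makes the sign pattern in the Krawtchouk sum cooperate (so that the alternating sum does not have cancellation working against the bound), and the factor $1/d$ should come from comparing $\binom{s}{j}$ with $s=1$ (the extreme odd case) against the even-case estimate. After these two cases are established, the lemma follows by taking the supremum over $m\neq 0$ and noting both branches are dominated as claimed.
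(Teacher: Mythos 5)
Your opening computation matches the paper's: expanding the character sum over the sphere and counting how many of the $r$ moved coordinates fall in the support of $m$ gives exactly $\widehat{S}_r(m)=q^{-d}\sum_i \binom{t}{i}\binom{d-t}{r-i}(-1)^i(q-1)^{r-i}=q^{-d}K_r(wt(m))$ with $t=wt(m)$, so the identification with the Krawtchouk polynomial is fine. The gap is everything after that: the two stated bounds are the real content of the lemma, and your proposal proves neither. For $wt(m)$ even you explicitly flag the constant $\binom{d}{d/2}\binom{d/2}{d/4}$ as the main obstacle and offer only generating-function speculation. The paper does not derive this bound from scratch; it imports two lemmas of Krasikov and Litsyn \cite{KL}, namely $|K_k(i)|\le |K_{d/2}(i)|$ for even argument $i$, and $|K_i(k)|\le \binom{d}{d/2}\binom{d/2}{i/2}/\binom{d}{k}$ for even degree $i$. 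Chaining these with degree $d/2$ is precisely where the hypothesis $4\mid d$ enters (so that $d/2$ is even and the second lemma applies, producing the factor $\binom{d/2}{d/4}$), yielding $|K_r(t)|\le \binom{d}{d/2}\binom{d/2}{d/4}$. Without these (or some substitute you actually prove), the even-weight branch is unsupported.

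For $wt(m)=i$ odd and $r$ even, your plan---iterate a contiguous recurrence in the argument to reach an even argument and thereby extract a factor $(q-1)^{r-1}\binom{d}{r}/d$---is not carried out, and it is not the mechanism that produces that factor. The paper instead uses the reciprocity (symmetry) relation $(q-1)^i\binom{d}{i}K_r(i)=(q-1)^r\binom{d}{r}K_i(r)$, which swaps degree and argument; since the new argument $r$ is even, the same two Krasikov--Litsyn bounds apply to $K_i(r)$, and the prefactor $(q-1)^{r-i}\binom{d}{r}/\binom{d}{i}$ is then estimated by $(q-1)^{r-1}\binom{d}{r}/d$ using $i\ge 1$ and $\binom{d}{i}\ge \binom{d}{1}=d$; that is the sole source of the $1/d$. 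Your proposed intermediate inequality $|K_r(s)|\le (q-1)^{r-1}\frac{\binom{d}{r}}{d}\sup_{s'\ \mathrm{even}}|K_r(s')|$ is neither proved nor the right target: the reduction lands on $K_{wt(m)}(r)$, with degree and argument exchanged, not on $K_r(s')$. As it stands, the proposal establishes $\widehat{S}_r(m)=q^{-d}K_r(wt(m))$ but neither of the claimed suprema bounds.
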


\begin{proof}
\begin{eqnarray*}
\widehat{S}_r(m)&=&q^{-d}\sum_{x\in \mathbb{F}_q^d}\chi(-x\cdot m)S_r(x)\\
&=&q^{-d}\sum_{\substack{x_{i_1},\dots,x_{i_r}\in \mathbb{F}_q^{*}\\i_j\in \{1,\dots d\}\\ i_j\ne i_k } } \chi (-x_{i_1}m_{i_1}-\dots-x_{i_r}m_{i_r})\\
&=&q^{-d}\sum_{\substack{x_{i_1},\dots,x_{i_r}\in \mathbb{F}_q^{*}\\i_j\in \{1,\dots d\}\\ i_j\ne i_k } }e^{\frac{-2\pi i}{q}(x_{i_1}m_{i_1}+\dots +x_{i_r}m_{i_r}) }\\
&=&q^{-d}\sum_{\substack{|\mathcal{I}^k|=r\\\mathcal{I}^k=(k_1,\dots,k_r)}} \prod_{i=1}^{r}\sum_{x_i\in \mathbb{F}_q^{*}} e^{\frac{-2\pi i}{q}(x_im_{k_i})}\\
&=&q^{-d}\sum_{\substack{\{k_1,\dots,k_r\}\subset \{1,\dots,d\}\\k_i<k_j\; \text{for}\; i<j }}\left(\sum_{x_1\in \mathbb{F}_q^*}e^{\frac{-2\pi i}{q}(x_1m_{k_1})} \dots\sum_{x_r\in \mathbb{F}_q^{*}}e^{\frac{-2\pi i}{q}(x_rm_{k_r})} \right)
\end{eqnarray*}
\end{proof}

First note that 

\begin{equation*}
\sum_{x_i\in \mathbb{F}_q^*}e^{\frac{-2\pi i}{q}(x_im_{k_i})} =\left\{
\begin{array}{rl}
q-1& \text{if } m_{k_i}=0\\ 
-1& \text{if } m_{k_i}\ne0.
\end{array} \right.
\end{equation*}

Now let $wt(m)=t$, $m=(m_1,\dots,m_t,\dots,m_d)$, where $m_i\ne0$ for $i=1,\dots, t$ and $m_i=0$ for $i=t+1,\dots,d$.  For a fixed $\mathcal{I}^k=(k_1,\dots,k_r)$,
let 

$$S_{\mathcal{I}^k}=\left(\sum_{x_1\in \mathbb{F}_q^*}e^{\frac{-2\pi i}{q}(x_1m_{k_1})}  \dots \sum_{x_r\in \mathbb{F}_q^{*}}e^{\frac{-2\pi i}{q}(x_rm_{k_r})} \right) .$$ Here  if $i$ coordinates  of $(m_{k_1}\dots,m_t,\dots,m_{k_r})$ are nonzero, then we get
$$S_{\mathcal{I}^k}=(-1)^{i}(q-1)^{r-i},$$
and we have $\binom{t}{i}\binom{d-t}{r-i}$ many such $(m_{k_1}\dots,m_t,\dots,m_{k_r})$. Summing over all possible $i$'s, $i=0,\dots,t$, we get

\begin{eqnarray}\label{shat}
\widehat{S}_r(m)&=&q^{-d}\sum_{i=0}^{r}\binom{t}{i}\binom{d-t}{r-i}(-1)^{i}(q-1)^{r-i}\\
&=&q^{-d}K_r(t)=q^{-d}K_r(wt(m))\nonumber
\end{eqnarray}
where $K_r(\cdot)$ denotes the Krawtchouk  polynomial. 

We will make use of the following two lemmas from \cite{KL}.

\begin{lemma}\emph{\cite[Lemma 1]{KL}}\label{first} For $d$ and $i$ even
$$|K_{k}(i)|\le |K_{d/2}(i)|$$
\end{lemma}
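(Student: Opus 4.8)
The plan is to exhibit $K_k(i)$ as a Fourier coefficient of an explicitly \emph{nonnegative} function and then invoke the elementary fact that, among all Fourier coefficients of a nonnegative function, the zeroth one is largest in absolute value; the degree $k=d/2$ will turn out to be exactly that zero-frequency term. I work with the binary Krawtchouk polynomials $K_k(i)=\sum_{j}(-1)^{j}\binom{i}{j}\binom{d-i}{k-j}$ from \cite{KL}, whose generating function is $\sum_{k=0}^{d}K_k(i)z^k=(1-z)^i(1+z)^{d-i}$.

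First I would recover $K_k(i)$ by Cauchy's coefficient formula on the unit circle: writing $z=e^{\sqrt{-1}\,\theta}$ gives $K_k(i)=\frac{1}{2\pi}\int_0^{2\pi}(1-e^{\sqrt{-1}\theta})^i(1+e^{\sqrt{-1}\theta})^{d-i}e^{-\sqrt{-1}k\theta}\,d\theta$. The half-angle identities $1-e^{\sqrt{-1}\theta}=-2\sqrt{-1}\,\sin(\theta/2)\,e^{\sqrt{-1}\theta/2}$ and $1+e^{\sqrt{-1}\theta}=2\cos(\theta/2)\,e^{\sqrt{-1}\theta/2}$ let the two phases combine into $e^{\sqrt{-1}d\theta/2}$ and the powers of $2$ collect to $2^d$; since $i$ is even, $(-2\sqrt{-1})^i=(-1)^{i/2}2^i$, so the leading constant is simply $(-1)^{i/2}2^d$. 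This yields
\begin{equation*}
K_k(i)=\frac{(-1)^{i/2}2^d}{2\pi}\int_0^{2\pi} g(\theta)\,e^{\sqrt{-1}(d/2-k)\theta}\,d\theta,\qquad g(\theta):=\sin^{i}(\theta/2)\,\cos^{d-i}(\theta/2),
\end{equation*}
where the entire dependence on the degree $k$ sits inside the exponential, and $k=d/2$ is exactly the zero-frequency term.

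The decisive point, and the only place both hypotheses are used, is that $g\ge 0$ everywhere: $i$ even makes $\sin^{i}(\theta/2)\ge 0$, and $d$ even (hence $d-i$ even) makes $\cos^{d-i}(\theta/2)\ge 0$. Therefore $\int_0^{2\pi}g(\theta)\,d\theta\ge 0$, so $|K_{d/2}(i)|=\frac{2^d}{2\pi}\int_0^{2\pi}g(\theta)\,d\theta$, whereas for arbitrary $k$ the triangle inequality gives $|K_k(i)|\le \frac{2^d}{2\pi}\int_0^{2\pi}g(\theta)\,d\theta=|K_{d/2}(i)|$, which is the assertion.

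I expect the difficulty to be bookkeeping rather than anything conceptual: one must track the constant $(-1)^{i/2}2^d$ (this is where evenness of $i$ is spent, on the phase) and confirm that $d-i$ is even so that $\cos^{d-i}(\theta/2)$ is genuinely nonnegative (this is where evenness of $d$ is spent). If one prefers to avoid complex contours, taking real parts turns the same identity into a cosine-transform statement, $K_k(i)=\frac{(-1)^{i/2}2^d}{2\pi}\int_0^{2\pi}g(\theta)\cos\!\big((d/2-k)\theta\big)\,d\theta$ (the sine part drops out by the symmetry $g(2\pi-\theta)=g(\theta)$, which itself uses $d-i$ even), and the comparison against the nonnegative weight $g$ is identical. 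As a sanity check, for $d=4,\ i=2$ one has $|K_k(2)|=1,0,2,0,1$, which indeed peaks at $k=2=d/2$.
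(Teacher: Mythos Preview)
The paper itself gives no proof of this lemma; it merely quotes it from \cite{KL}. Your argument is correct and is essentially the integral-representation proof that appears in the cited source: write $K_k(i)$ as the $k$th coefficient of $(1-z)^i(1+z)^{d-i}$, extract it by integrating over $|z|=1$, use the half-angle identities to rewrite the integrand as $(-1)^{i/2}2^{d}\,g(\theta)\,e^{\sqrt{-1}(d/2-k)\theta}$ with $g(\theta)=\sin^{i}(\theta/2)\cos^{d-i}(\theta/2)\ge 0$ (this is exactly where both parity hypotheses are consumed), and then observe that the zero-frequency term $k=d/2$ dominates by the triangle inequality. The bookkeeping with the constant $(-1)^{i/2}2^d$ and the symmetry remark for the cosine form are both accurate.

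One caveat you already anticipated by specifying ``binary Krawtchouk polynomials'': the result in \cite{KL} is for $q=2$, whereas the Krawtchouk polynomial in the paper's formula $\widehat{S}_r(m)=q^{-d}K_r(wt(m))$ is the $q$-ary one. The inequality $|K_k(i)|\le|K_{d/2}(i)|$ fails in general for $q>2$ (e.g.\ $d=4$, $i=2$, $q=3$ gives $|K_3(2)|=4>3=|K_2(2)|$), so the paper's appeal to \cite{KL} is itself problematic in the $q$-ary setting --- but that is a defect of the surrounding argument, not of your proof of the lemma as stated and proved in \cite{KL}.
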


\begin{lemma}\emph{\cite[Lemma 2]{KL}} \label{second} For $k$ integer, $d$ and $i$ even
$$|K_i(k)|\le \frac{\binom{d}{d/2} \binom{d/2}{i/2} }{\binom{d}{k}}$$ 
\end{lemma}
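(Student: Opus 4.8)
The plan is to reduce the inequality to two standard facts about the Krawtchouk polynomials $K_a(\cdot)$ of \cite{KL} — the reciprocity (duality) relation and the exact evaluation at the midpoint $d/2$ — and then to feed in Lemma \ref{first}, which is already at my disposal. Concretely, I will use the generating identity $\sum_{a}K_a(x)z^a=(1-z)^x(1+z)^{d-x}$ together with the reciprocity relation
$$\binom{d}{b}K_a(b)=\binom{d}{a}K_b(a),$$
both of which are elementary consequences of the definition of $K_a$ used in the excerpt.

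First I would apply reciprocity with the pair $(a,b)=(i,k)$ to obtain $\binom{d}{k}K_i(k)=\binom{d}{i}K_k(i)$, hence
$$|K_i(k)|=\frac{\binom{d}{i}}{\binom{d}{k}}\,|K_k(i)|.$$
Since $d$ and $i$ are even, Lemma \ref{first} applies and gives $|K_k(i)|\le|K_{d/2}(i)|$, so that
$$|K_i(k)|\le\frac{\binom{d}{i}}{\binom{d}{k}}\,|K_{d/2}(i)|.$$
Next I would evaluate this remaining quantity exactly. Applying reciprocity once more, this time with the pair $(d/2,i)$, yields $\binom{d}{i}K_{d/2}(i)=\binom{d}{d/2}K_i(d/2)$, which converts the factor $\binom{d}{i}|K_{d/2}(i)|$ into $\binom{d}{d/2}|K_i(d/2)|$. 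The argument $d/2$ is the midpoint, where the generating function collapses: at $x=d/2$,
$$\sum_{a}K_a(d/2)z^a=(1-z)^{d/2}(1+z)^{d/2}=(1-z^2)^{d/2}=\sum_{s}(-1)^s\binom{d/2}{s}z^{2s}.$$
Reading off the coefficient of $z^i$ (recall $i$ is even, say $i=2s$) gives $K_i(d/2)=(-1)^{i/2}\binom{d/2}{i/2}$, so $|K_i(d/2)|=\binom{d/2}{i/2}$. Substituting back produces
$$|K_i(k)|\le\frac{1}{\binom{d}{k}}\binom{d}{d/2}\binom{d/2}{i/2},$$
which is precisely the claimed bound.

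The argument is short, and the only substantive input beyond bookkeeping is Lemma \ref{first}, which I am permitted to assume; the two applications of reciprocity and the midpoint evaluation are routine. The point to be careful about is the indexing and parity: reciprocity must be invoked with the correct pairs so that the binomial factors telescope into $\binom{d}{d/2}/\binom{d}{k}$, and the clean midpoint value relies on $i$ being even (for $i$ odd the coefficient of $z^i$ in $(1-z^2)^{d/2}$ vanishes, which is consistent with the bound). If one prefers to avoid quoting the generating function, the identity $K_i(d/2)=(-1)^{i/2}\binom{d/2}{i/2}$ can instead be checked directly from the defining convolution $K_i(d/2)=\sum_{j}(-1)^j\binom{d/2}{j}\binom{d/2}{i-j}$, which is the coefficient of $z^i$ in $(1-z^2)^{d/2}$; I expect this to be the least interesting step and would relegate it to a one-line remark.
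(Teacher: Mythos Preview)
The paper does not actually prove this lemma; it is quoted without argument from \cite{KL}. Your proof is correct and is in fact the same short argument that \cite{KL} gives: reciprocity yields $|K_i(k)|=\tfrac{\binom{d}{i}}{\binom{d}{k}}|K_k(i)|$, Lemma~\ref{first} bounds $|K_k(i)|$ by $|K_{d/2}(i)|$, a second reciprocity turns $\binom{d}{i}|K_{d/2}(i)|$ into $\binom{d}{d/2}|K_i(d/2)|$, and the midpoint evaluation $|K_i(d/2)|=\binom{d/2}{i/2}$ read off from $(1-z^2)^{d/2}$ finishes. One small caveat worth flagging: the generating function and reciprocity you use are the \emph{binary} ones, matching the setting of \cite{KL}; the Krawtchouk polynomials appearing in this paper at \eqref{shat} are $q$-ary, so the transfer of the bound to that context needs the weighted reciprocity $(q-1)^i\binom{d}{i}K_k(i)=(q-1)^k\binom{d}{k}K_i(k)$ rather than the unweighted one --- but that is an issue with how the paper applies the lemma, not with your proof of the lemma as stated.
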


Now using Lemmas \ref{first} and \ref{second}, we immediately obtain that if $wt(m)$ is even, then 

$$\sup |\widehat{S}_r(m) | \le q^{-d} \binom{d}{d/2} \binom {d/2}{d/4} $$

On the other hand, if $wt(m)=i$ is odd, then using the symmetry relation of Krowchouk polynomials, now assuming that $r$ is even , we obtain 

\begin{eqnarray*}
 |\widehat{S}_r(m) |&=&q^{-d}K_r(wt(m))\\
 &=&q^{-d}K_{r}(i)\\
 &=&q^{-d}\frac{(q-1)^r\binom{d}{r}K_i(r) }{(q-1)^{i}\binom{d}{i}}\\
 &\le& q^{-d}(q-1)^{r-i} \frac{\binom{d}{r}}{ \binom{d}{i}} \binom{d}{d/2} \binom {d/2}{d/4}\\
 &\le& q^{-d}(q-1)^{r-1}\frac{ \binom{d}{r} }{d} \binom{d}{d/2} \binom {d/2}{d/4}
\end{eqnarray*}

\begin{proof}[Proof of Theorem \ref{main}]
Let $0<r< d$  be even. Let  $\lambda_r=|\{(x,y)\in E\times E: |x-y|=r\}|$. Then
\begin{eqnarray}\label{lambda}
\lambda_r&=&\sum_{x,y\in \mathbb{F}_q^d}E(x)E(y)S_r(x-y)\nonumber \\
&=&\sum_{x,y,m\in \mathbb{F}_q^d}E(x)E(y)\widehat{S}_r(m)\chi(m\cdot(x-y))\nonumber\\
&=&q^{2d}\sum_{m}|\widehat{E}(m)|^2\widehat{S}_r(m)\nonumber\\
&=&q^{2d}|\widehat{E}(0)|^2\widehat{S}_r(0)+q^{2d}\sum_{m\ne 0}|\widehat{E}(m)|^2\widehat{S}_r(m)\nonumber\\
&=&q^{-d}|E|^2|S_r|+q^{2d}\sum_{m\ne 0}|\widehat{E}(m)|^2\widehat{S}_r(m)\nonumber\\
&=&q^{-d}|E|^2(q-1)^r\binom{d}{r}+q^{2d}\sum_{\substack{m\ne 0\\wt(m) \text{ is even}}}|\widehat{E}(m)|^2\widehat{S}_r(m)+q^{2d}\sum_{\substack{m\ne 0\\wt(m) \text{ is odd}}}|\widehat{E}(m)|^2\widehat{S}_r(m)\nonumber\\
&=&q^{-d}|E|^2(q-1)^r\binom{d}{r}+I+II
\end{eqnarray}
where 
$$ I=q^{2d}\sum_{\substack{m\ne 0\\wt(m) \text{ is even}}}|\widehat{E}(m)|^2\widehat{S}_r(m) $$
and
$$ II=q^{2d}\sum_{\substack{m\ne 0\\wt(m) \text{ is odd}}}|\widehat{E}(m)|^2\widehat{S}_r(m) $$

 We will  first estimate $| I |$. By Lemma \ref{supshat} and Plancherel identity, it follows that
\begin{eqnarray*}
|I|&\leq& q^{2d} q^{-d}    \binom{d}{d/2} \binom{d/2}{d/4} \sum_{\substack{m\ne 0\\wt(m) \text{ is even}}}|\widehat{E}(m)|^2\\
&\le& q^{d}    \binom{d}{d/2} \binom{d/2}{d/4}\sum_{m}|\widehat{E}(m)|^2\\
&\leq& q^{d}    \binom{d}{d/2} \binom{d/2}{d/4}q^{-d}|E|\\
&=&  \binom{d}{d/2} \binom{d/2}{d/4}|E|
\end{eqnarray*} 
Now we will estimate $|II|$. Again using Lemma \ref{supshat} and Plancherel identity, we obtain that
\begin{eqnarray*}
|II|&\le& q^{2d}q^{-d}(q-1)^{r-1}\frac{\binom{d}{r}}{d}\binom{d}{d/2}\binom{d/2}{d/4}  \sum_{\substack{m\ne 0\\wt(m) \text{ is odd}}}|\widehat{E}(m)|^2\\
&\le& q^{d}(q-1)^{r-1}\frac{\binom{d}{r}}{d}\binom{d}{d/2}\binom{d/2}{d/4}  \sum_{m}|\widehat{E}(m)|^2\\
&\le& q^{d+r-1}\frac{\binom{d}{r}}{d}\binom{d}{d/2}\binom{d/2}{d/4}q^{-d}|E|\\
&=&q^{r-1}\frac{\binom{d}{r}}{d}\binom{d}{d/2}\binom{d/2}{d/4}|E|
\end{eqnarray*}

Clearly, $|I|\le |II|$

It follows  from (\ref{lambda}) that, if $q^{-d}|E|^2(q-1)^r\binom{d}{r}>  q^{r-1}\frac{\binom{d}{r}}{d}\binom{d}{d/2}\binom{d/2}{d/4}|E|$, that is  if 
$$|E|>\frac{q^{d-1}}{d}\binom{d}{d/2}\binom{d/2}{d/4}$$ then 
 $\lambda_r>0.$

\end{proof}


\end{document}